\newtheorem{theorem}{Theorem}[section]
\newtheorem{corollary}[theorem]{Corollary}
\newtheorem{proposition}[theorem]{Proposition}
\theoremstyle{definition}
\newtheorem{definition}[theorem]{Definition}
\theoremstyle{remark}
\newtheorem{remark}[theorem]{Remark}
\numberwithin{equation}{section}
\begin{document}

\title{Chaos for Cowen-Douglas operators}
\author{Bingzhe Hou }
\address{Bingzhe Hou, Department of Mathematics , Jilin university, 130012, Changchun, P.R.China} \email{houbz@jlu.edu.cn}

\author{Puyu Cui}
\address{Puyu Cui, Department of Mathematics , Dalian university of technology, 116024, Dalian, P.R.China} \email{cuipuyu1234@sina.com.cn}

\author {Yang Cao}
\address{Yang Cao, Department of Mathematics , Jilin university, 130012, Changchun, P.R.China} \email{caoyang@jlu.edu.cn}

\date{Jan. 4, 2009}
\subjclass[2000]{Primary 47B37, 47B99; Secondary 54H20, 37B99}
\keywords{Cowen-Douglas operators, strongly mixing, Devaney chaos,
distributional chaos.}
\thanks{The first author
is supported by the Youth Foundation of Department of Mathematics,
Jilin university.}
\begin{abstract}
In this article, we provide a sufficient condition which gives
Devaney chaos and distributional chaos for Cowen-Douglas operators.
In fact, we obtain a distributionally chaotic criterion for bounded
linear operators on Banach spaces.
\end{abstract}
\maketitle

\section{Introduction and preliminaries}
A discrete dynamical system is simply a continuous mapping $f:
X\rightarrow X$ where $X$ is a complete separable metric space. For
$x\in X$, the orbit of $x$ under $f$ is
$Orb(f,x)=\{x,f(x),f^{2}(x),\ldots\}$ where $f^{n}= f\circ f\circ
\cdots \circ f $ is the $n^{th}$ iterate of $f$ obtained by
composing $f$ with $n$ times.

Recall that $f$ is transitive if for any two non-empty open sets
$U,V$ in $X$, there exists an integer $n\geq1$ such that
$f^{n}(U)\cap V\neq \phi$. It is well known that, in a complete
metric space without isolated points, transitivity is equivalent to
the existence of dense orbit (\cite{Silverman}). $f$ is weakly
mixing if $(f\times f, X\times X$ is transitive. $f$ is strongly
mixing if for any two non-empty open sets $U,V$ in $X$, there exists
an integer $m\geq1$ such that $f^{n}(U)\cap V\neq \phi$ for every
$n\geq m$. $f$ has sensitive dependence on initial conditions (or
simply $f$ is sensitive)if there is a constant $\delta>0$ such that
for any $x\in X$ and any neighborhood $U$ of $x$, there exists a
point $y\in X$ such that $d(f^{n}(x),f^{n}(y))> \delta$, where $d$
denotes the metric on $X$.

In 1975, Li and Yorke \cite{L-Y} observed complicated dynamical
behavior for the class of interval maps with period 3. This
phenomena is currently known under the name of Li-Yorke chaos.
Therefrom, several kinds of chaos were well studied. In the present
article, we focus on Devaney chaos and distributional chaos.

Following Devaney \cite{Devaney},

\begin{definition}

Let $(X,f)$ be a dynamical system. $f$ is chaotic if

$(D1)$ $f$ is transitive;

$(D2)$ the periodic points for $f$ are dense in $X$; and

$(D3)$ $f$ has sensitive dependence on initial conditions.

\end{definition}

It was shown by Banks et. al. (\cite{Banks}) that $(D1) + (D2)$
implies $(D3)$ for any aperiodic system. Devaney chaos is a stronger
version chaos than Li-Yorke chaos, which is given by Huang and Ye
\cite{Huang} and  Mai \cite{Mai}.

From Schweizer and Sm\'{\i}tal's paper \cite{S-S}, distributional
chaos is defined in the following way.

For any pair $\{x,y\}\subset X$ and any $n\in \mathbb{N}$, define
distributional function $F^{n}_{xy}:\mathbb{R}\rightarrow [0,1]$:
\begin{equation*}
F^{n}_{xy}(\tau)=\frac{1}{n}\#\{0\leq i\leq n:
d(f^{i}(x),f^{i}(y))<\tau\}.
\end{equation*}
Furthermore, define
\begin{align*}
F_{xy}(\tau)=\liminf\limits_{n\rightarrow\infty}F^{n}_{xy}(\tau), \\
F_{xy}^{*}(\tau)=\limsup\limits_{n\rightarrow\infty}F^{n}_{xy}(\tau)
\end{align*}
Both $F_{xy}$ and $F_{xy}^{*}$ are nondecreasing functions and may
be viewed as cumulative probability distributional functions
satisfying $F_{xy}(\tau)=F_{xy}^{*}(\tau)=0$ for $\tau<0$.

\begin{definition}
$\{x,y\}\subset X$ is said to be a distributionally chaotic pair, if
$$
F_{xy}^{*}(\tau)\equiv 1, \ \ \forall \ \ \tau>0 \quad and \quad
F_{xy}(\epsilon)=0, \ \ \exists \ \ \epsilon>0.
$$
Furthermore, $f$ is called distributionally chaotic, if there exists
an uncountable subset $D\subseteq X$ such that each pair of two
distinct points is a distributionally chaotic pair. Moreover, $D$ is
called a distributionally  $\epsilon$-scrambled set.
\end{definition}

Distributional chaos always implies Li-Yorke chaos, as it requires
more complicated statistical dependence between orbits than the
existence of points which are proximal but not asymptotic. The
converse implication is not true in general. However in practice,
even in the simple case of Li-Yorke chaos, it might be quite
difficult to prove chaotic behavior from the very definition. Such
attempts have been made in the context of linear operators (see
\cite{Duan, Fu}). Further results of \cite{Duan} were extended in
\cite{Opr} to distributional chaos for the annihilation operator of
a quantum harmonic oscillator. More about distributional chaos, one
can see \cite{Smital1, Smital2, Liao1, Liao2, Wang}.

Let $\mathcal {H}$ be a complex separable Hilbert space and
$\mathcal {L}(\mathcal {H})$ denote the collection of bounded linear
operators on $\mathcal {H}$. We are interesting in a family of
operators given by Cowen and Douglas \cite{Cowen}.

\begin{definition}
For $\Omega$ a connected open subset of $\mathbb{C}$ and $n$ a
positive integer, let $\mathcal {B}_{n}(\Omega)$ denotes the
operators $T$ in $\mathcal {L}(\mathcal {H})$ which satisfy:

(a) $\Omega \subseteq \sigma(T)=\{\omega \in \mathbb{C}: T-\omega
 {\ not \ invertible}\}$;

(b) $ran(T-\omega)=\mathcal {H} \ for \ \omega \ in \ \Omega$;

(c) $\bigvee ker_{\omega\in \Omega}(T-\omega)=\mathcal {H}$; and

(d) $dim \ ker(T-\omega)=n$ for $\omega$ in $\Omega$.
\end{definition}

We have known some properties of these operators from \cite{Cowen}.
\begin{proposition}\label{one span}
Let $T\in \mathcal {B}_{n}(\Omega)$ and $\omega_{0}\in \Omega$. Then
 $\bigvee\limits_{k=1}^{\infty} ker(T-\omega_{0})^{k}=\mathcal {H}$.
\end{proposition}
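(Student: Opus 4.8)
The plan is to use the holomorphic eigenvector frame supplied by the Cowen--Douglas structure theory, push its Taylor coefficients into the generalized eigenspaces at $\omega_0$, and then invoke the spanning property (c). By \cite{Cowen}, on some disk $\Delta\subseteq\Omega$ centered at $\omega_0$ there are holomorphic maps $\gamma_1,\dots,\gamma_n\colon\Delta\to\mathcal H$ with $(T-\omega)\gamma_i(\omega)=0$ on $\Delta$ and $\{\gamma_1(\omega),\dots,\gamma_n(\omega)\}$ a basis of $\ker(T-\omega)$ for each $\omega\in\Delta$. Set $M=\bigvee_{k=1}^{\infty}\ker(T-\omega_0)^{k}$, a norm-closed subspace; the objective is $M=\mathcal H$.

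The first step is a differentiation computation. Writing $T\gamma_i(\omega)=\omega\gamma_i(\omega)$ and differentiating $k$ times in $\omega$ gives $(T-\omega)\gamma_i^{(k)}(\omega)=k\,\gamma_i^{(k-1)}(\omega)$, so evaluating at $\omega_0$ and iterating yields $(T-\omega_0)^{k+1}\gamma_i^{(k)}(\omega_0)=0$. Hence $x_{i,k}:=\tfrac{1}{k!}\gamma_i^{(k)}(\omega_0)\in\ker(T-\omega_0)^{k+1}\subseteq M$ for all $i$ and all $k\ge 0$. Since $\gamma_i(\omega)=\sum_{k\ge 0}(\omega-\omega_0)^{k}x_{i,k}$ converges in $\mathcal H$ for $\omega\in\Delta$ and $M$ is closed, it follows that $\gamma_i(\omega)\in M$ for every $\omega\in\Delta$, whence $\ker(T-\omega)=\operatorname{span}\{\gamma_1(\omega),\dots,\gamma_n(\omega)\}\subseteq M$ and therefore $\bigvee_{\omega\in\Delta}\ker(T-\omega)\subseteq M$.

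The remaining, and I expect the only nontrivial, step is to show that the eigenvectors over the single disk $\Delta$ already span $\mathcal H$; then $M=\mathcal H$ is forced. I would prove this by a monodromy-type argument: if $g\in\mathcal H$ is orthogonal to $\ker(T-\omega)$ for all $\omega\in\Delta$, then each scalar function $\omega\mapsto\langle\gamma_i(\omega),g\rangle$ vanishes on $\Delta$; covering a path from $\omega_0$ to an arbitrary $\omega_1\in\Omega$ by finitely many disks equipped with holomorphic frames, and using that on overlaps two such frames differ by an invertible holomorphic matrix, the identity theorem propagates this vanishing disk by disk, so $g\perp\ker(T-\omega_1)$. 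As $\omega_1$ is arbitrary, property (c) gives $g\perp\mathcal H$, i.e.\ $g=0$; hence $\bigvee_{\omega\in\Delta}\ker(T-\omega)=\mathcal H$, and the proof is complete.

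The obstacle here is genuinely the last step: the eigenbundle $\omega\mapsto\ker(T-\omega)$ need not be holomorphically trivial over all of $\Omega$, so one cannot simply analytically continue a single global frame and must instead patch local frames along paths. If one prefers, the statement ``$\bigvee_{\omega\in\Omega_0}\ker(T-\omega)=\mathcal H$ for every nonempty open $\Omega_0\subseteq\Omega$'' can be isolated as a lemma (it is essentially contained in \cite{Cowen}) and simply quoted, after which the argument above reduces to the elementary power-series bookkeeping of the first two paragraphs.
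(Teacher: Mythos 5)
The paper does not actually prove this proposition: it is quoted from \cite{Cowen} as a known property of Cowen--Douglas operators, so there is no in-paper argument to compare against. Your proof is correct, and it is essentially the standard argument from \cite{Cowen}: the identity $(T-\omega)\gamma_i^{(k)}(\omega)=k\,\gamma_i^{(k-1)}(\omega)$ and the induction $(T-\omega_0)^{k+1}\gamma_i^{(k)}(\omega_0)=0$ are right, the norm-convergence of the vector-valued Taylor series on $\Delta$ justifies passing to the closed subspace $M$, and the patching of local frames by invertible holomorphic transition matrices together with the identity theorem correctly propagates the orthogonality of $g$ from $\Delta$ to all of $\Omega$. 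One remark: the step you single out as the only nontrivial one, namely $\bigvee_{\omega\in\Delta}\ker(T-\omega)=\mathcal H$, is exactly condition (c) for $T$ viewed as an element of $\mathcal B_n(\Delta)$, which is the content of Proposition~\ref{local span} already stated in the paper (also quoted from \cite{Cowen}); invoking that proposition would let you delete the monodromy paragraph entirely and reduce the proof to the power-series bookkeeping, at the cost of leaning on the reference for both quoted facts rather than one.
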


\begin{proposition}\label{local span}
If $\Omega_{0} \subseteq \Omega$ is a bounded connected open subset
of $\mathbb{C}$, then $B_{n}(\Omega) \subseteq B_{n}(\Omega_{0})$.
\end{proposition}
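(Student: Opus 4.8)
The plan is to notice that, of the four defining conditions for membership in $\mathcal{B}_n(\Omega_0)$, three are \emph{pointwise} conditions on the points of $\Omega_0$ and therefore pass down for free. Precisely, let $T\in\mathcal{B}_n(\Omega)$. Since $\Omega_0\subseteq\Omega$ we have at once $\Omega_0\subseteq\Omega\subseteq\sigma(T)$, $\operatorname{ran}(T-\omega)=\mathcal{H}$ for $\omega\in\Omega_0$, and $\dim\ker(T-\omega)=n$ for $\omega\in\Omega_0$, so conditions (a), (b), (d) hold for $\Omega_0$. (Note that (a) already forces $\Omega_0$ to be bounded, since $\sigma(T)$ is compact, so the boundedness hypothesis is there only for emphasis.) Thus everything reduces to the global spanning condition (c): one must show $\bigvee_{\omega\in\Omega_0}\ker(T-\omega)=\mathcal{H}$.

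Fix $\omega_0\in\Omega_0$ and set $M=\overline{\bigvee_{\omega\in\Omega_0}\ker(T-\omega)}$. By Proposition~\ref{one span} it suffices to prove that $\ker(T-\omega_0)^k\subseteq M$ for every $k\ge1$, since then $\mathcal{H}=\bigvee_{k\ge1}\ker(T-\omega_0)^k\subseteq M$. To handle $\ker(T-\omega_0)^k$ I would invoke the structural fact from \cite{Cowen}: on a connected open neighbourhood $U$ of $\omega_0$ there exist holomorphic maps $e_1,\dots,e_n\colon U\to\mathcal{H}$ with $(T-\omega)e_i(\omega)=0$ and $\{e_1(\omega),\dots,e_n(\omega)\}$ a basis of $\ker(T-\omega)$ for each $\omega\in U$; as $\omega_0\in\Omega_0$ and $\Omega_0$ is open, shrink $U$ so that $U\subseteq\Omega_0$. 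Differentiating $(T-\omega)e_i(\omega)=0$ repeatedly at $\omega_0$ via the Leibniz rule (using that $\omega\mapsto T-\omega$ has derivative $-I$ and vanishing higher derivatives) shows that the vectors $v_{i,j}:=\tfrac{1}{j!}e_i^{(j)}(\omega_0)$ satisfy $(T-\omega_0)v_{i,0}=0$ and $(T-\omega_0)v_{i,j}=v_{i,j-1}$, hence $v_{i,j}\in\ker(T-\omega_0)^{j+1}$. Moreover each $v_{i,j}$ is a limit of divided differences of $\omega\mapsto e_i(\omega)$ as $\omega\to\omega_0$ inside $U\subseteq\Omega_0$, and $e_i(\omega)\in\ker(T-\omega)$, so $v_{i,j}\in M$.

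It then remains to see that these vectors exhaust $\ker(T-\omega_0)^k$, which I would do by a dimension count. Applying $(T-\omega_0)^{k-1},(T-\omega_0)^{k-2},\dots$ successively to a relation $\sum_{i,j}c_{i,j}v_{i,j}=0$ peels off the coefficients layer by layer (using that $\{v_{i,0}\}=\{e_i(\omega_0)\}$ is a basis of $\ker(T-\omega_0)$), so $\{v_{i,j}:1\le i\le n,\ 0\le j\le k-1\}$ is linearly independent and spans a space of dimension $nk$. On the other hand, writing $\ker^j:=\ker(T-\omega_0)^j$, the map $T-\omega_0$ induces injections $\ker^{j}/\ker^{j-1}\hookrightarrow\ker^{j-1}/\ker^{j-2}\hookrightarrow\cdots\hookrightarrow\ker^1$, so $\dim(\ker^{j}/\ker^{j-1})\le\dim\ker^1=n$ and hence $\dim\ker(T-\omega_0)^k\le nk$. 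Therefore $\ker(T-\omega_0)^k=\operatorname{span}\{v_{i,j}:1\le i\le n,\ 0\le j\le k-1\}\subseteq M$, completing the verification of (c) and the proof. (Alternatively, (c) follows by contradiction: a nonzero $x\perp M$ would make $\omega\mapsto\langle e_i(\omega),x\rangle$ vanish on the nonempty open set $\Omega_0$, and patching local frames over the connected set $\Omega$ by the identity theorem would force $x\perp\ker(T-\omega)$ for all $\omega\in\Omega$, contradicting condition (c) for $\Omega$.)

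I expect the only real obstacle to be the identification of $\ker(T-\omega_0)^k$ with the span of the derivatives $e_i^{(j)}(\omega_0)$: this depends on the genuinely nontrivial existence of a \emph{holomorphic} frame for the eigenvector bundle, which we take from \cite{Cowen}, together with the elementary but slightly fiddly dimension bookkeeping above; the remaining steps are routine.
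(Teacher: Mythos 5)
Your argument is correct. The paper itself gives no proof of this proposition: it is quoted verbatim from \cite{Cowen} as a known property of the class $\mathcal{B}_n(\Omega)$, so there is nothing to compare against line by line. What you have written is essentially the standard Cowen--Douglas argument: conditions (a), (b), (d) restrict pointwise for free; for (c) you reduce via Proposition~\ref{one span} to showing $\ker(T-\omega_0)^k$ lies in the closed span $M$ of the eigenspaces over $\Omega_0$, and you get this from the holomorphic frame $e_1,\dots,e_n$ by checking that the vectors $v_{i,j}=\tfrac{1}{j!}e_i^{(j)}(\omega_0)$ satisfy $(T-\omega_0)v_{i,j}=v_{i,j-1}$, lie in $M$ as limits of divided differences taken inside $\Omega_0$, and span all of $\ker(T-\omega_0)^k$ by the linear-independence and $\dim\ker(T-\omega_0)^k\le nk$ bookkeeping. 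All of these steps check out (the injectivity of the induced maps $\ker^j/\ker^{j-1}\to\ker^{j-1}/\ker^{j-2}$ is exactly what gives the upper bound $nk$), and your parenthetical alternative via the identity theorem and a connectedness/patching argument over $\Omega$ is also sound. The only external input is the existence of a local holomorphic frame for the eigenvector bundle, which is a genuine theorem of \cite{Cowen} and is fair to cite; given that, your proof is complete.
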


In this paper, $S$ is always used to denote the unit circle in
$\mathbb{C}$. In the next section, we provide a sufficient condition
$\Omega\cap S\neq \phi$ which gives Devaney chaos for Cowen-Douglas
operators. In the last section, we obtain a distributionally chaotic
criterion for bounded linear operators on Banach spaces. Applied by
this distributionally chaotic criterion, $\Omega\cap S\neq \phi$ is
also a sufficient condition which gives distributional chaos for
Cowen-Douglas operators.

\section{Devaney chaos for Cowen-Douglas operators}

\begin{proposition}\label{str mixing}
Let $T\in \mathcal {B}_{n}(\Omega)$. If $\Omega\cap S\neq \phi$,
then $T$ is strongly mixing.
\end{proposition}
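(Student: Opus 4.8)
The plan is to verify a Kitai--type mixing criterion for $T$ using its eigenvectors. First I would exploit the hypothesis $\Omega\cap S\neq\phi$: fix $\omega_{0}\in\Omega$ with $|\omega_{0}|=1$. Since $\Omega$ is open it contains a disk $D(\omega_{0},r)$, which meets both the open unit disk $\mathbb{D}=\{|\omega|<1\}$ and the set $\{|\omega|>1\}$; hence I can choose $\omega_{1},\omega_{2}\in\Omega$ with $|\omega_{1}|<1<|\omega_{2}|$ together with small open disks $D_{1}\ni\omega_{1}$, $D_{2}\ni\omega_{2}$ satisfying $D_{1}\subseteq\Omega\cap\mathbb{D}$ and $D_{2}\subseteq\Omega\cap\{|\omega|>1\}$. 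Each $D_{j}$ is a bounded connected open subset of $\Omega$, so Proposition \ref{local span} gives $T\in\mathcal{B}_{n}(D_{1})$ and $T\in\mathcal{B}_{n}(D_{2})$, and condition (c) in the definition of $\mathcal{B}_{n}$, applied to $D_{1}$ and to $D_{2}$, yields
\[
\bigvee_{\omega\in D_{1}}\ker(T-\omega)=\mathcal{H}=\bigvee_{\omega\in D_{2}}\ker(T-\omega).
\]
Consequently the algebraic linear spans $E_{-}:=\operatorname{span}\{u:Tu=\omega u,\ \omega\in D_{1}\}$ and $E_{+}:=\operatorname{span}\{v:Tv=\omega v,\ \omega\in D_{2}\}$ are both dense in $\mathcal{H}$, every eigenvalue occurring in $E_{-}$ has modulus $<1$, and every eigenvalue occurring in $E_{+}$ has modulus $>1$.

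Next I would set up the criterion. For $u=\sum_{j}u_{j}\in E_{-}$ with $Tu_{j}=\omega_{j}u_{j}$ and $|\omega_{j}|<1$, one has $T^{k}u=\sum_{j}\omega_{j}^{k}u_{j}\to 0$ as $k\to\infty$. On $E_{+}$ define a linear map $S$ by $Sv=\omega^{-1}v$ whenever $Tv=\omega v$ with $\omega\in D_{2}$, extended by linearity; this is well defined because eigenspaces for distinct eigenvalues are linearly independent and $S$ acts as a fixed scalar on each eigenspace. Then $S(E_{+})\subseteq E_{+}$, $TS=I$ on $E_{+}$ (hence $T^{k}S^{k}=I$ on $E_{+}$ for all $k$), and for $v=\sum_{j}v_{j}\in E_{+}$ one has $S^{k}v=\sum_{j}\omega_{j}^{-k}v_{j}\to 0$ since $|\omega_{j}^{-1}|<1$.

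Finally I would deduce strong mixing in the standard way. Given non-empty open sets $U,V\subseteq\mathcal{H}$, choose $u\in E_{-}\cap U$ and $v\in E_{+}\cap V$ by density, and put $x_{k}=u+S^{k}v$. Since $S^{k}v\to 0$ we get $x_{k}\in U$ for all large $k$, while $T^{k}x_{k}=T^{k}u+T^{k}S^{k}v=T^{k}u+v\in V$ for all large $k$ because $T^{k}u\to 0$. Hence there is $m\geq 1$ with $T^{k}(U)\cap V\neq\phi$ for all $k\geq m$, which is precisely strong mixing.

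I expect the one genuinely delicate point to be the first paragraph, namely upgrading "the eigenvectors over all of $\Omega$ span $\mathcal{H}$" to "the eigenvectors over an arbitrarily small disk lying inside (resp. outside) the unit circle already span $\mathcal{H}$." This is exactly what Proposition \ref{local span} supplies, and the only thing to be careful about is choosing the subregions $D_{1},D_{2}$ to be bounded, connected, and open, which is immediate after shrinking to small disks around suitable points. Everything past that is routine Godefroy--Shapiro / Kitai bookkeeping.
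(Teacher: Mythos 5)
Your proposal is correct and follows essentially the same route as the paper: both use Proposition \ref{local span} to get dense spans of eigenvectors with eigenvalues of modulus $<1$ (resp. $>1$), and both test mixing on a point of the form $u+\sum_j\omega_j^{-k}v_j$, which is exactly your $u+S^{k}v$. The paper simply phrases the Kitai-style bookkeeping with explicit $\epsilon$-estimates (using $\lambda=\sup_{\Omega_1}|\alpha|<1$ and $\rho=\inf_{\Omega_2}|\beta|>1$) instead of naming the right inverse $S$.
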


\begin{proof}
Suppose $U$ and $V$ be arbitrary open subsets in $\mathcal {H}$. We
have $\epsilon>0$ and open subsets $U'$ and $V'$ such that
$$B(u',\epsilon)\subseteq U  \ \ and \ \ B(v',\epsilon)\subseteq V ,$$
for any $u'\in U'$ and any $v'\in V'$. Since $\Omega$ is a connected
open subset and $\Omega\cap S\neq \phi$, there are two bounded
connected open subsets $\Omega_{1}$ and $\Omega_{2}$ in $\Omega$
such that
$$\sup\limits_{\alpha\in\Omega_{1}}|\alpha|=\lambda<1 \ \ and \ \ \inf\limits_{\beta\in\Omega_{2}}|\beta|=\rho>1.$$
By proposition \ref{local span}, there exist two points $x\in U'$
and $y\in V'$ with the following forms:
$$x=\sum\limits_{i=1}^{t}x_{i} \ \ and \ \ y=\sum\limits_{j=1}^{l}y_{j},$$
where $x_{i}\in  {ker(T-\lambda_{i})}, \ {\lambda_{i}\in
\Omega_{1}}\ and \ \ y_{j}\in  {ker(T-\rho_{j})}, \ \rho_{j}\in
\Omega_{2} $.

Now let
$M=max\{\sum\limits_{i=1}^{t}\|x_{i}\|,\sum\limits_{j=1}^{l}\|y_{j}\|
\}$. Then there is a positive integer $N$ such that for each $k\geq
N$,
$$\lambda^{k}<\epsilon/M\ \ \ \  and \ \ \ \ \rho^{-k}<\epsilon/M.$$
Given any $k\geq N$, let
$u(k)=x+\sum\limits_{j=1}^{l}\rho_{j}^{-k}y_{j}$. Obviously,
$$\|u(k)-x\|=\|\sum\limits_{j=1}^{l}\rho_{j}^{-k}y_{j}\|\leq\sum\limits_{j=1}^{l}|\rho_{j}^{-k}|\cdot\|y_{j}\|
\leq\rho^{-k}\sum\limits_{j=1}^{l}\|y_{j}\|<\epsilon,$$ so $u(k)\in
U$. On the other hand,
$$\|T^{k}u(k)-y\|=\|\sum\limits_{i=1}^{t}\lambda_{i}^{k}x_{i}\|\leq\sum\limits_{i=1}^{t}|\lambda_{i}^{k}|\cdot\|x_{i}\|
\leq\lambda^{k}\sum\limits_{i=1}^{t}\|x_{i}\|<\epsilon,$$ that
implies $T^{k}u(k) \in V$. Hence $T^{k}(U)\cap V\neq\phi$ and
consequently $T$ is strongly mixing.

\end{proof}

\begin{proposition}\label{dense per}
Let $T\in \mathcal {B}_{n}(\Omega)$. If $\Omega\cap S\neq \phi$,
then $Per(T)$ is dense in $\mathcal {H}$.
\end{proposition}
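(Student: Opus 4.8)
The plan is to mimic the structure of the proof of Proposition \ref{str mixing}, but now exploiting the part of $\Omega$ that lies \emph{on} the unit circle $S$ to manufacture periodic points. Since $\Omega\cap S\neq\phi$ and $\Omega$ is open and connected, the set $\Omega\cap S$ is a nonempty (relatively open) arc of $S$, hence it contains infinitely many roots of unity: for every sufficiently large $q$ there is a primitive $q$-th root of unity $\zeta$ with $\zeta\in\Omega$, and in fact $\Omega$ contains a whole neighbourhood of each such $\zeta$. If $\lambda\in\Omega$ is a $q$-th root of unity and $x\in\ker(T-\lambda)$, then $T^{q}x=\lambda^{q}x=x$, so $x$ is a periodic point of period dividing $q$. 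Thus every vector lying in the span of eigenspaces $\ker(T-\lambda)$ with $\lambda$ a root of unity in $\Omega$ is a periodic point of $T$.

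The key step is therefore a density argument: I want to show that the linear span of
$$
\mathcal{P}=\bigcup\{\ker(T-\lambda):\ \lambda\in\Omega,\ \lambda^{q}=1\text{ for some }q\in\mathbb{N}\}
$$
is dense in $\mathcal{H}$. Fix an arbitrary open ball $B(u,\epsilon)$. Pick a small bounded connected open subset $\Omega_{0}\subseteq\Omega$ containing an arc of $S$; by Proposition \ref{local span}, $T\in\mathcal{B}_{n}(\Omega_{0})$, so condition (c) of the Cowen--Douglas definition gives $\bigvee_{\omega\in\Omega_{0}}\ker(T-\omega)=\mathcal{H}$. Hence there is a finite sum $z=\sum_{i=1}^{t}z_{i}$, with $z_{i}\in\ker(T-\mu_{i})$, $\mu_{i}\in\Omega_{0}$, such that $\|u-z\|<\epsilon/2$. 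Now I perturb each $\mu_{i}$ to a nearby root of unity: because the roots of unity are dense in $S$ and each $\mu_{i}$ lies in the open set $\Omega_{0}$ which meets $S$ in an arc, for each $i$ I can choose a root of unity $\lambda_{i}\in\Omega_{0}$ as close to $\mu_{i}$ as I like. Picking an eigenvector $x_{i}\in\ker(T-\lambda_{i})$ with $\|x_{i}-z_{i}\|$ small (this requires a continuity/estimate on how eigenvectors vary, see below), the vector $x=\sum_{i=1}^{t}x_{i}$ is periodic and satisfies $\|u-x\|<\epsilon$.

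The main obstacle is making the perturbation $z_{i}\rightsquigarrow x_{i}$ quantitative: a priori the eigenvector $z_{i}\in\ker(T-\mu_{i})$ need not be close to \emph{any} eigenvector of $T-\lambda_{i}$ just because $\mu_{i}$ is close to $\lambda_{i}$, and the eigenspaces are only known to be $n$-dimensional, not one-dimensional. The clean way around this is to \emph{reverse the order of the two steps}: first use the density of roots of unity in $S\cap\Omega_{0}$ together with a Baire-category or direct argument to see that $\bigvee\{\ker(T-\lambda):\lambda\in\Omega_{0},\ \lambda\text{ a root of unity}\}$ already equals $\bigvee_{\omega\in\Omega_{0}}\ker(T-\omega)=\mathcal{H}$ — this follows because the family of root-of-unity eigenvalues accumulates throughout $\Omega_{0}$ and, by analyticity of the section selecting a basis of $\ker(T-\omega)$ guaranteed by Cowen--Douglas theory, a vector orthogonal to all these eigenspaces would be orthogonal to $\ker(T-\omega)$ for all $\omega\in\Omega_{0}$ by the identity theorem, hence zero. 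Once density of the span of root-of-unity eigenvectors is established, we directly approximate $u$ within $\epsilon$ by a finite linear combination of such eigenvectors, which is automatically a periodic point; therefore $Per(T)$ is dense in $\mathcal{H}$.
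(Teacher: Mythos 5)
Your proposal is correct in outline, but it takes a genuinely different route from the paper, and the difference matters. The paper's proof is much shorter: it picks a single root of unity $s\in\Omega\cap S$, asserts that every generalized eigenvector $x\in\ker(T-s)^{k}$ satisfies $T^{km(s)}x=x$, and concludes from Proposition \ref{one span} that $Per(T)\supseteq\bigcup_{k}\ker(T-s)^{k}$ is dense. You instead use only genuine eigenvectors, but at infinitely many root-of-unity eigenvalues accumulating inside $\Omega$, and you obtain density of their span from the existence of local holomorphic cross-sections $\omega\mapsto\gamma_{j}(\omega)$ spanning $\ker(T-\omega)$ (a standard Cowen--Douglas fact, though not among the propositions the paper quotes) together with the identity theorem: if $u\perp\ker(T-\lambda)$ for all roots of unity $\lambda$ in a small disc $D\subseteq\Omega$ meeting $S$, then each holomorphic function $\langle\gamma_{j}(\cdot),u\rangle$ vanishes on a set with an accumulation point in $D$, hence vanishes identically, so $u\perp\ker(T-\omega)$ for every $\omega\in D$ and $u=0$ by Proposition \ref{local span} and condition (c) applied to $D$. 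This is more machinery than the paper uses, but it buys something real: the paper's key assertion fails for $k\ge 2$, since on $\ker(T-s)^{k}$ one has $T=sI+N$ with $N$ nilpotent, so $T^{m}x=x+ms^{m-1}Nx+\cdots\neq x$ whenever $Nx\neq 0$, even when $s^{m}=1$; only honest eigenvectors are periodic, and your argument never touches generalized eigenvectors, so it avoids this gap entirely. Two small points to tidy: $\Omega\cap S$ need not be a single arc, only a nonempty relatively open subset of $S$ (which suffices, as it still contains infinitely many roots of unity accumulating at a point of $\Omega$); and your first, perturbative attempt should simply be dropped, since, as you yourself observe, there is no control on how $\ker(T-\mu)$ varies with $\mu$ without invoking the holomorphic frame anyway, at which point the second argument supersedes it.
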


\begin{proof}
Let $\Delta=\{e^{2\pi ri}: for \ all \ rational \ numbers \ \ r\}$.
Then $\Delta$ is  dense in $S$, and one can see that for each
$\delta\in \Delta$, there exists a positive integer $m(\delta)$ such
that $\delta$ is a root of the equation $z^{m(\delta)}=1$. Since
$\Omega$ is a connected open subset and $\Omega\cap S\neq \phi$, we
have $\Omega\cap \Delta \neq \phi$. Now let $s\in\Omega\cap \Delta$.
If $x\in ker(T-s)^{k}$ for any $k$, then $T^{km(s)}(x)=x$ and hence
$\bigcup\limits_{k=1}^{\infty} ker(T-s)^{k}\subseteq Per(T)$.
Therefore, by proposition \ref{one span}, $Per(T)$ is dense in
$\mathcal {H}$.
\end{proof}
By Proposition \ref{str mixing} and \ref{dense per}, one can see the
following result immediately.
\begin{theorem}\label{De-chaos}
Let $T\in \mathcal {B}_{n}(\Omega)$. If $\Omega\cap S\neq \phi$,
then $T$ is Devaney chaotic.
\end{theorem}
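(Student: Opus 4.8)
The plan is to deduce Devaney chaos for $T$ directly from the two propositions that precede it, since the definition of Devaney chaos has three ingredients $(D1)$, $(D2)$, $(D3)$, and the first two are handed to us. First I would invoke Proposition \ref{str mixing}: the hypothesis $\Omega\cap S\neq\phi$ gives that $T$ is strongly mixing. Strong mixing trivially implies transitivity (for any nonempty open $U,V$ there is $m$ with $T^n(U)\cap V\neq\phi$ for all $n\geq m$, so in particular some such $n$ exists), which is exactly condition $(D1)$. Next I would invoke Proposition \ref{dense per}: the same hypothesis gives that $\operatorname{Per}(T)$ is dense in $\mathcal{H}$, which is condition $(D2)$.

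It then remains to obtain $(D3)$, sensitive dependence on initial conditions. Rather than verify it by hand, I would appeal to the Banks et al.\ theorem cited in the introduction: $(D1)+(D2)$ implies $(D3)$ for any aperiodic system. So the only genuine point to check is that the dynamical system $(\mathcal{H},T)$ is aperiodic, i.e.\ that $T$ is not itself a periodic map of $\mathcal{H}$ and, more precisely, that $\mathcal{H}$ is infinite (or at least that the whole space does not consist of periodic points of a single period). This is immediate here: a Cowen--Douglas operator $T\in\mathcal{B}_n(\Omega)$ acts on an infinite-dimensional Hilbert space $\mathcal{H}$ — indeed condition (c) together with $\dim\ker(T-\omega)=n\geq 1$ for all $\omega$ in a connected open set forces $\mathcal{H}$ to be infinite-dimensional — so $(\mathcal{H},T)$ certainly has points that are not periodic, and the Banks et al.\ hypothesis is met. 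Combining, $T$ satisfies $(D1)$, $(D2)$, $(D3)$, hence is Devaney chaotic.

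I expect essentially no obstacle: the theorem is a packaging corollary of Propositions \ref{str mixing} and \ref{dense per} plus the Banks et al.\ result. The only spot requiring a word of care is the aperiodicity hypothesis in Banks et al.; one could alternatively bypass it by checking sensitivity directly from strong mixing (a strongly mixing map on a space without isolated points that also has a dense set of periodic points is easily seen to be sensitive, since near any $x$ one finds a periodic point $p$ of some period, and by mixing the forward orbit of a small ball around $x$ eventually covers any fixed ball disjoint from $\operatorname{Orb}(T,p)$, separating nearby orbits by a fixed $\delta$). Either route is short, so the write-up is just: apply Proposition \ref{str mixing} for transitivity, apply Proposition \ref{dense per} for density of periodic points, and conclude via \cite{Banks}.
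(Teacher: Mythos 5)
Your proposal is correct and matches the paper exactly: the paper derives the theorem immediately from Proposition \ref{str mixing} (giving transitivity) and Proposition \ref{dense per} (giving dense periodic points), with sensitivity supplied by the Banks et al.\ result cited in the introduction. Your extra care about the aperiodicity hypothesis is a reasonable elaboration of a detail the paper leaves implicit, but the route is the same.
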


\begin{remark}
Notice that $\Omega\cap S\neq \phi$ is not a necessary condition for
Devaney chaos for Cowen-Douglas operators. As well-known, the
backward shift operator $T$, with the weight sequence
$\{\omega_{n}={\frac{n+1}{n}}\}_{n=1}^{\infty}$, is a Devaney
chaotic Cowen-Douglas operator. However, the largest connected open
domain $\Omega$ for $T$, which admits $T\in B_1(\Omega)$, is the
unit open disk and hence is disjoint with $S$.
\end{remark}

\section{Distributionally Chaotic Criterion and its application on  Cowen-Douglas operators}

First of all, we'll give a new concept which is very useful to prove
an bounded linear operator is distributional chaotic.

\begin{definition}
Let $X$ be a Banach space and let $T\in \mathcal {L}(X)$. $T$ is
called norm-unimodal, if we have a constant $\gamma
>1$ such that for any $m\in\mathbb{N}$, there exists $x_m\in
X$ satisfying

$(NU1) \ \ \  \ \ \lim\limits_{k\rightarrow\infty}\|T^kx_m\|=0, $

$(NU2)\ \ \  \ \ \ \| T^ix_m \|\geq \gamma^i\|x_m\|, \ \
i=1,2,\ldots,m. $

Furthermore, such $\gamma$ is said to be a norm-unimodal constant
for the norm-unimodal operator $T$.
\end{definition}

\begin{remark}
If $x$ is the point referred to in the above definition, then for
any $c\in \mathbb{C}$, $cx$ has the same properties as $x$ because
of the linearity of $T$. Therefore, we can select a point with
arbitrary non-zero norm satisfying the same conditions.
\end{remark}

\begin{theorem}[Distributionally Chaotic Criterion]\label{D-C-C}
Let $X$ be a Banach space and let $T\in \mathcal {L}(X)$. If $T$ is
norm-unimodal, then $T$ is distributionally chaotic.
\end{theorem}

\begin{proof}

Let $R=\|T\|$ and let $\gamma$  be a norm-unimodal constant for $T$.
Suppose $\{\epsilon_k\}_{k=1}^{\infty}$ be a sequence of positive
numbers decreasing to zero. First of all, fix $N_1\in\mathbb{N}$
(for example, set $N_1=2$). Then there is $x_1$ such that
$\|x_1\|=1$ and
$$
 \lim\limits_{k\rightarrow\infty}\|T^kx_1\|=0, \ \ and \ \ \ \|
T^ix_1 \|\geq \gamma^i\|x_1\|, \ \ i=1,\ldots,N_1.
$$
So we can choose $M_1$ such that $\|T^nx\|<\epsilon_1$ for any
$n\geq M_1$. For convenience, let $N'_1=0$. Then $\| T^ix_1 \|\geq
1, \ \ i=N'_1,\ldots,N_1.$

Now we'll construct a sequence of points $\{x_k\}_{k=1}^{\infty}$
associated with three sequences of integers
$\{N_k\}_{k=1}^{\infty}$, $\{N'_k\}_{k=1}^{\infty}$and
$\{M_k\}_{k=1}^{\infty}$ such that for every $k\geq2$

(I) $\|x_k\|=R^{-M_{k-1}}\cdot2^{-k}\cdot\epsilon_{k-1}$ ;

(II) $\| T^ix_k \|\geq \gamma^i\|x_k\|, \ \ i=1,\ldots,N_k $;

(III) $\gamma^{N'_k}\cdot
R^{-M_{k-1}}\cdot2^{-k}\cdot\epsilon_{k-1}>1$;

(IV) $\frac{N_k-N'_k}{N_k}>\frac{k-1}{k}$;

(V) $\sum\limits_{j=1}^k\|T^nx_j\|<\epsilon_k$, for any $n\geq M_k$.
\par
\par
Select $N'_2\in\mathbb{N}$ with $\gamma^{N'_2}\cdot
R^{-M_1}\cdot2^{-2}\cdot\epsilon_1>1$. Consequently, we have
$N_2\in\mathbb{N}$ such that
$\frac{N_2-N'_2}{N_2}>\frac{2-1}{2}=\frac{1}{2}$. And then there is
$x_2$ such that $\|x_2\|=R^{-M_1}\cdot2^{-2}\cdot\epsilon_1$ and
$$
 \lim\limits_{k\rightarrow\infty}\|T^kx_2\|=0, \ \ and \ \ \ \|
T^ix_2 \|\geq \gamma^i\|x_2\|, \ \ i=1,\ldots,N_2.
$$
So we can choose $M_2$ such that $\|T^nx_1\|+\|T^nx_2\|<\epsilon_2$
for any $n\geq M_2$.

Continue in this manner. If we have obtained $\{x_k\}_{k=1}^{m}$,
$\{N_k\}_{k=1}^{m}$, $\{N'_k\}_{k=1}^{m}$ and $\{M_k\}_{k=1}^{m}$
such that for each $k=2,\ldots,m$

(1) $\|x_k\|=R^{-M_{k-1}}\cdot2^{-k}\cdot\epsilon_{k-1}$ ;

(2) $\| T^ix_k \|\geq \gamma^i\|x_k\|, \ \ i=1,\ldots,N_k $;

(3) $\gamma^{N'_k}\cdot
R^{-M_{k-1}}\cdot2^{-k}\cdot\epsilon_{k-1}>1$;

(4) $\frac{N_k-N'_k}{N_k}>\frac{k-1}{k}$;

(5) $\sum\limits_{j=1}^k\|T^nx_j\|<\epsilon_k$, for any $n\geq M_k$;

Select $N'_{m+1}\in\mathbb{N}$ with $\gamma^{N'_{m+1}}\cdot
R^{-M_m}\cdot2^{-(m+1)}\cdot\epsilon_m>1$. Consequently, we have
$N_{m+1}\in\mathbb{N}$ such that
$\frac{N_{m+1}-N'_{m+1}}{N_{m+1}}>\frac{m+1-1}{m+1}=\frac{m}{m+1}$.
And then there is $x_{m+1}$ such that
$\|x_{m+1}\|=R^{-M_m}\cdot2^{-(m+1)}\cdot\epsilon_m$ and
$$
 \lim\limits_{k\rightarrow\infty}\|T^kx_{m+1}\|=0, \ \ and \ \ \ \|
T^ix_{m+1}\|\geq \gamma^i\|x_{m+1}\|, \ \ i=1,\ldots,N_{m+1}.
$$
So we can choose $M_{m+1}$ such that
$\sum\limits_{j=1}^{m+1}\|T^nx_j\|<\epsilon_{m+1}$ for any $n\geq
M_{m+1}$.

Therefore, we obtain a sequence of points $\{x_k\}_{k=1}^{\infty}$
associated with three sequences of integers
$\{N_k\}_{k=1}^{\infty}$, $\{N'_k\}_{k=1}^{\infty}$ and
$\{M_k\}_{k=1}^{\infty}$ satisfying conditions (I-V). Moreover,
conditions (I-III) imply following statement:

(VI) $\sum\limits_{k=1}^{\infty}\| x_k \|$ is finite.

(VII) For each $p$, $\| T^ix_k \|< 2^{-k}\epsilon_{k-1}$, for any
$k>p$ and any $1\leq i \leq M_p$. Hence,
$\sum\limits_{k=p+1}^{\infty}\| T^ix_k
\|<\sum\limits_{k=p+1}^{\infty} 2^{-k}\epsilon_{k-1}<\epsilon_p$,
for any $1\leq i \leq M_p$.

(VIII) For each $k$, $\| T^ix_k \|\geq 1$, $i=N'_k,\ldots,N_k$.
\par
\par
Notice $M_k>N_k>N'_k>M_{k-1}$ for each $k$ by the manner of our
construction. Then we have

(V') $\sum\limits_{j=1}^{k-1}\|T^nx_j\|<\epsilon_{k-1}$, for
$n=N'_{k},\ldots,N_{k}.$

(VII') For each $p$, $\sum\limits_{k=p+1}^{\infty}\| T^nx_k
\|<\epsilon_p$, $n=N'_{p},\ldots,N_{p}. $

Let $\Sigma_2=\{0,1\}^\mathbb{N}$ be a symbolic space with two
symbols. According to condition (VI), we can define a map $f:
\Sigma_2\rightarrow X$ as follows,
$$
f(\xi)=\sum\limits_{k=1}^{\infty}\xi_kx_k,
$$
for every element $\xi=(\xi_1,\xi_2, \ldots)\in \Sigma_2$.

Obviously one can get an uncountable subset $D\in \Sigma_2$ such
that for any two distinct $\xi, \xi'\in D$, $\xi$ and $\xi'$ have
infinite coordinates different and infinite coordinates equivalent.
Then
$$
d(f(\xi),
f(\xi'))=\|f(\xi)-f(\xi)\|=\|\sum\limits_{k=1}^{\infty}(\xi_k-\xi'_k)
x_k \|.
$$

Set $\theta=(\theta_1,\theta_2,
\ldots)=(\xi_1-\xi'_1,\xi_2-\xi'_2,\ldots)$. Then $ d(f(\xi),
f(\xi'))=\|\sum\limits_{k=1}^{\infty}\theta_k x_k \|$. Note that the
possible values of $\xi_k-\xi'_k$ are only 0, -1 or 1, and $\theta$
has infinite coordinates being zero and infinite coordinates being
nonzero.

Now we'll prove that $\{f(\xi),f(\xi')\}$ is a distributionally
chaotic pair.

Let $z=\sum\limits_{k=1}^{\infty}\theta_k x_k $. Suppose
$\{k_q\}_{q=1}^{\infty}$ be the infinite subsequence such that the
$k_q-th$ coordinate of $\theta$ is nonzero(1 or -1) and
$\{k_r\}_{r=1}^{\infty}$ be the infinite subsequence such that the
$k_r-th$ coordinate of $\theta$ is zero.

By (V'), (VII') and (VIII), for $n=N'_{k_q},\ldots,N_{k_q}$
$$
\| T^nz \|\geq \| T^n(\theta_{k_q}x_{k_q})
\|-\sum\limits_{j=1}^{k_q-1}\|T^nx_j\|-\sum\limits_{j=k_q+1}^{\infty}\|
T^nx_j \|>1-\epsilon_{k_q-1}-\epsilon_{k_q}.
$$
Since $\{\epsilon_k\}_{k=1}^{\infty}$ decrease to zero, then
\begin{eqnarray*}
F_{f(\xi)f(\xi')}(\frac{1}{2})&=&\liminf\limits_{n\rightarrow\infty}F^{n}_{f(\xi)f(\xi')}(\frac{1}{2})
\\ &\leq&
\liminf\limits_{q\rightarrow\infty}F^{N_{k_q}}_{f(\xi)f(\xi')}(\frac{1}{2})
\\ &\leq& \lim\limits_{q\rightarrow\infty}\frac{N'_{k_q}}{N_{k_q}} \\
&\leq&
\lim\limits_{q\rightarrow\infty}\frac{1}{k_q}=0.
\end{eqnarray*}
On the other hand, for $n=N'_{k_r},\ldots,N_{k_r}$
$$
\| T^nz \|\leq \| T^n(\theta_{k_r}x_{k_r})
\|+\sum\limits_{j=1}^{k_r-1}\|T^nx_j\|+\sum\limits_{j=k_r+1}^{\infty}\|
T^nx_j \|\leq\epsilon_{k_r-1}+\epsilon_{k_r}.
$$
Since $\{\epsilon_k\}_{k=1}^{\infty}$ decrease to zero, then for any
$\tau>0$
\begin{eqnarray*}
F_{f(\xi)f(\xi')}^{*}(\tau)&=&\limsup\limits_{n\rightarrow\infty}F^{n}_{f(\xi)f(\xi')}(\tau)
\\ &\geq&
\limsup\limits_{r\rightarrow\infty}F^{N_{k_r}}_{f(\xi)f(\xi')}(\tau)
\\ &\geq&
\lim\limits_{r\rightarrow\infty}\frac{N_{k_r}-N'_{k_r}+1}{N_{k_r}}
\\ &\geq& \lim\limits_{r\rightarrow\infty}\frac{k_r-1}{k_r}=1.
\end{eqnarray*}
Therefore, $\{f(\xi),f(\xi')\}$ is a distributionally chaotic pair
for any distinct points $\xi, \xi' \in D$ and hence $f(D)$ is a
distributionally  $\epsilon$-scrambled set. This ends the proof.

\end{proof}

In the present article, it has been filled by this conclusion. In
fact, we could extend it in some way. For instance, we have

\begin{theorem}[Weakly Distributionally Chaotic Criterion]\label{W-D-C-C}
Let $X$ be a Banach space and let $T\in \mathcal {L}(X)$. If for any
sequence of positive numbers $C_m$ increasing to $+\infty$, there
exist $\{x_m\}_{m=1}^{\infty}$ in $X$ satisfying

$(WNU1) \ \ \  \ \ \lim\limits_{k\rightarrow\infty}\|T^kx_m\|=0$.

$(WNU2)$  \ \ There is a sequence of positive integers $N_m$
increasing to $+\infty$, such that
$\lim\limits_{m\rightarrow\infty}\frac{\# \{0\leq i\leq N_m; \|
T^ix_m \|\geq C_m\|x_m\| \} }{N_m}=1$.

Then $T$ is distributionally chaotic.

\end{theorem}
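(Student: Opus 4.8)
The plan is to reproduce the architecture of the proof of Theorem~\ref{D-C-C}: construct inductively a sequence $\{x_k\}_{k\ge 1}\subset X$ with $\sum_k\|x_k\|<\infty$, arranged so that for each $k$ the $T$-orbit of $x_k$ dominates the orbits of all the other $x_j$ over a block of times $[0,N_k]$ whose ``good part'' has relative size tending to $1$; then transport an uncountable independent set $D\subset\Sigma_2=\{0,1\}^{\mathbb N}$ into $X$ by $f(\xi)=\sum_k\xi_kx_k$ and verify that $f(D)$ is a distributionally $\tfrac12$-scrambled set.

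Concretely, I would set $R=\max\{\|T\|,1\}$, fix $\epsilon_k\downarrow 0$ with $\epsilon_0=1$, put $M_0=0$, fix once and for all some sequence $C_m\uparrow+\infty$, and apply the hypothesis to get nonzero vectors $x_m$ and integers $N_m\uparrow+\infty$ with $\#G_m/N_m\to 1$, where $G_m=\{0\le i\le N_m:\|T^ix_m\|\ge C_m\|x_m\|\}$. Then run the induction: having $x_1,\dots,x_{k-1}$, $N_1,\dots,N_{k-1}$, $M_1,\dots,M_{k-1}$ with $M_{j}>N_j$ for $j<k$, the $k$-th vector should have norm $\rho_k:=R^{-M_{k-1}}2^{-k}\epsilon_{k-1}$, which automatically forces $\|T^ix_k\|\le 2^{-k}\epsilon_{k-1}$ for $i\le M_{k-1}$ (the analogue of condition $(VII)$ there), and its ``good'' iterates should have norm $\ge 1$. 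The key difficulty is a circularity: we cannot use $x_k$ itself, because that would require $C_k\rho_k\ge 1$, i.e. $C_k\ge R^{M_{k-1}}2^k\epsilon_{k-1}^{-1}$, whereas $M_{k-1}$ only becomes known during the construction and the sequence $C_m$ had to be fixed beforehand. The way around it is to dig deep into the given sequence: since $C_m,N_m\to\infty$ and $\#G_m/N_m\to 1$, choose $m(k)>m(k-1)$ so large that $C_{m(k)}\ge\rho_k^{-1}$, $\#G_{m(k)}/N_{m(k)}>1-\tfrac1{2k}$ and $N_{m(k)}\ge 2kM_{k-1}$, and set $x_k:=(\rho_k/\|x_{m(k)}\|)\,x_{m(k)}$, $N_k:=N_{m(k)}$. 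Scale invariance of conditions $(WNU1)$ and $(WNU2)$ gives $\|T^jx_k\|\to 0$ and $\|T^ix_k\|\ge C_{m(k)}\rho_k\ge 1$ for every $i\in G_{m(k)}$; deleting from $G_{m(k)}$ the at most $M_{k-1}$ indices below $M_{k-1}$ leaves a set $G'_k\subset[M_{k-1},N_k]$ with $\#G'_k/N_k>1-\tfrac1k$ on which $\|T^ix_k\|\ge 1$. Finally pick $M_k>N_k$ with $\sum_{j=1}^k\|T^nx_j\|<\epsilon_k$ for all $n\ge M_k$, possible by $(WNU1)$.

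From here the argument becomes essentially a transcription of Theorem~\ref{D-C-C}. Since $\|x_k\|\le 2^{-k}$, $f$ is well defined; choose $D\subset\Sigma_2$ uncountable with distinct elements agreeing in infinitely many and differing in infinitely many coordinates. For $\xi\ne\xi'$ in $D$ write $z=f(\xi)-f(\xi')=\sum_k\theta_kx_k$, $\theta\in\{-1,0,1\}^{\mathbb N}$, and let $\{k_q\}$, $\{k_r\}$ enumerate the indices where $\theta$ is nonzero, resp. zero. For $n\in G'_{k_q}$, splitting $z$ and using $n\ge M_{k_q-1}$ for the lower block (condition on $M_{k_q-1}$) and $n\le N_{k_q}<M_{l-1}$ together with $\|T^nx_l\|\le R^n\|x_l\|\le 2^{-l}\epsilon_{l-1}$ for $l>k_q$ for the upper tail, one obtains $\|T^nz\|>1-\epsilon_{k_q-1}-\epsilon_{k_q}$; evaluating distributions along $n=N_{k_q}$ then gives $F^{N_{k_q}}_{f(\xi)f(\xi')}(\tfrac12)<\tfrac1{N_{k_q}}+\tfrac1{k_q}\to 0$, so $F_{f(\xi)f(\xi')}(\tfrac12)=0$. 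Symmetrically, for $n$ in the integer interval $[M_{k_r-1},N_{k_r}]$ the $k_r$-term drops out and $\|T^nz\|\le\epsilon_{k_r-1}+\epsilon_{k_r}$, so for every $\tau>0$ one gets $F^{N_{k_r}}_{f(\xi)f(\xi')}(\tau)>1-\tfrac1{2k_r}\to 1$, hence $F^*_{f(\xi)f(\xi')}(\tau)\equiv 1$. Since $z\ne 0$, $f$ is injective on $D$, so $f(D)$ is an uncountable distributionally $\tfrac12$-scrambled set and $T$ is distributionally chaotic.

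The only genuinely new work is handling the circularity just described — the prescribed growth sequence $C_m$ must be chosen before the inductive norms $\rho_k=R^{-M_{k-1}}2^{-k}\epsilon_{k-1}$ are available — which I resolve by not insisting that the $k$-th vector be $x_k$ but a rescaled copy of some $x_{m(k)}$ with $m(k)$ chosen huge inside the induction. A secondary point, absent in Theorem~\ref{D-C-C}, is that the ``good'' set of times for $x_k$ is no longer a subinterval; intersecting it with $[M_{k-1},N_k]$ preserves relative density near $1$ exactly because $N_{m(k)}$ was taken large compared with $M_{k-1}$. The remaining estimates are the same bookkeeping as in Theorem~\ref{D-C-C}.
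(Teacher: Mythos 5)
Your proposal is correct and is essentially the argument the paper intends: the paper gives no proof of Theorem~\ref{W-D-C-C} beyond the remark that it is ``similar to Theorem~\ref{D-C-C}'', and your construction is exactly that adaptation, with the one genuinely new point (that the prescribed sequence $C_m$ must be fixed before the inductive norms $\rho_k=R^{-M_{k-1}}2^{-k}\epsilon_{k-1}$ are known) correctly resolved by selecting $m(k)$ deep enough that $C_{m(k)}\ge\rho_k^{-1}$ and $N_{m(k)}\ge 2kM_{k-1}$ and then rescaling $x_{m(k)}$. The only caveat, which is a defect of the theorem's statement rather than of your proof, is that one must read the hypothesis as supplying \emph{nonzero} vectors $x_m$ (as you implicitly do), since otherwise $(WNU1)$--$(WNU2)$ are vacuously satisfied by $x_m=0$.
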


The proof is similar to Theorem \ref{D-C-C}. According to
Grosse-Erdmann's characterization \cite{Grosse-Erdmann}, it is not
difficult to see that Devaney chaotic backward shift operators
satisfy the conditions of Theorem \ref{W-D-C-C}. So one can get the
following conclusion immediately.

\begin{corollary}\label{shift}
If $T$ is a Devaney chaotic backward shift operator, then $T$ is
distributionally chaotic.
\end{corollary}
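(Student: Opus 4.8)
The plan is to check that a Devaney chaotic weighted backward shift meets the hypotheses $(WNU1)$ and $(WNU2)$ of Theorem~\ref{W-D-C-C} and then invoke that theorem; I will treat the unilateral weighted backward shift $T=B_{w}$ on $\ell^{p}$, given by $B_{w}e_{n}=w_{n}e_{n-1}$, the $c_{0}$ case being entirely analogous. Put $\beta_{0}=1$, $\beta_{n}=\prod_{k=1}^{n}w_{k}$, and record the elementary facts $\|T^{i}e_{n}\|=\beta_{n}/\beta_{n-i}$ for $0\le i\le n-1$, $T^{i}e_{n}=0$ for $i\ge n$, and, for a contiguous block $y_{L}=e_{1}+\cdots+e_{L}$ (whose images $T^{i}e_{1},\dots,T^{i}e_{L}$ have pairwise disjoint supports), $\|T^{i}y_{L}\|^{p}=\sum_{s=1}^{L-i}(\beta_{i+s}/\beta_{s})^{p}$ for $0\le i<L$; in particular $\|T^{i}y_{L}\|\ge\beta_{i+1}/\beta_{1}$ for $0\le i\le L-1$.

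The first ingredient is Grosse-Erdmann's characterization~\cite{Grosse-Erdmann}: $B_{w}$ is Devaney chaotic precisely when the weight products obey the appropriate condition ($\sum_{n}\beta_{n}^{-p}<\infty$ on $\ell^{p}$, $\beta_{n}\to\infty$ on $c_{0}$). From this I would extract a rate at which $\beta_{n}$ escapes to infinity: writing $N(t)=\#\{n:\beta_{n}<t\}$, this set is finite for every $t$, and in the $\ell^{p}$ case one checks easily (splitting $\{n:\beta_{n}<t\}$ at $\beta_{n}=\sqrt{t}$ and using that a tail of the convergent series $\sum_{n}\beta_{n}^{-p}$ tends to $0$) that $N(t)=o(t^{p})$ as $t\to\infty$.

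Now fix any sequence $C_{m}$ increasing to $+\infty$ and build the witnessing vectors. Because $B_{w}$ is a backward shift, every finitely supported vector has orbit tending to $0$, so $(WNU1)$ holds automatically and the whole task is $(WNU2)$. Set $x_{m}=\|y_{L_{m}}\|^{-1}y_{L_{m}}$ (so $\|x_{m}\|=1$), where the integers $L_{m}$ increase to $\infty$ and are chosen large enough that $N\!\left(C_{m}\beta_{1}\|y_{L_{m}}\|\right)/L_{m}\to 0$: in the $\ell^{p}$ case $\|y_{L_{m}}\|=L_{m}^{1/p}$, so this ratio equals $\left(N(A_{m})/A_{m}^{p}\right)C_{m}^{p}\beta_{1}^{p}$ with $A_{m}=C_{m}\beta_{1}L_{m}^{1/p}\to\infty$, which tends to $0$ by $N(t)=o(t^{p})$ once $L_{m}$ is taken large enough for each fixed $m$ (the $c_{0}$ case being even easier, since there $N(C_{m}\beta_{1})$ is a fixed finite number). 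Put $N_{m}=L_{m}-1$. If $0\le i\le N_{m}$ and $\beta_{i+1}\ge C_{m}\beta_{1}\|y_{L_{m}}\|$, then $\|T^{i}x_{m}\|\ge\|y_{L_{m}}\|^{-1}\beta_{i+1}/\beta_{1}\ge C_{m}=C_{m}\|x_{m}\|$; and the set of $i\in\{0,\dots,N_{m}\}$ for which this fails lies in $\{i:\beta_{i+1}<C_{m}\beta_{1}\|y_{L_{m}}\|\}$, of cardinality at most $N(C_{m}\beta_{1}\|y_{L_{m}}\|)$. Hence the proportion appearing in $(WNU2)$ is at least $1-N(C_{m}\beta_{1}\|y_{L_{m}}\|)/L_{m}\to 1$, and Theorem~\ref{W-D-C-C} delivers distributional chaos.

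The delicate point is the third step, namely the tension in the $\ell^{p}$ case between the block length $L_{m}$ and the threshold it forces on $\beta_{i+1}$, which is $C_{m}\beta_{1}L_{m}^{1/p}$: lengthening the block spreads the unit norm of $x_{m}$ over more coordinates and thereby raises the bar, and what keeps the estimate from closing on itself is exactly the rate $N(t)=o(t^{p})$ supplied by the chaos hypothesis, together with the freedom in choosing $L_{m}$. Everything else — the automatic $(WNU1)$ and the crude bound $\|T^{i}x_{m}\|\ge\|y_{L_{m}}\|^{-1}\beta_{i+1}/\beta_{1}$ — is routine. It is worth noting that Devaney chaotic weighted backward shifts need not be norm-unimodal: for the shift with $w_{n}=(n+1)/n$ from the earlier Remark one has $\beta_{n}=n+1$, so the amplifications $\beta_{n}/\beta_{n-i}$ are only of polynomial, not exponential, size and Theorem~\ref{D-C-C} does not apply directly — which is precisely why the weaker Theorem~\ref{W-D-C-C} is the right instrument here.
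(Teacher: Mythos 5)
Your proposal is correct and follows essentially the same route the paper indicates: verify $(WNU1)$--$(WNU2)$ via Grosse-Erdmann's characterization and invoke Theorem~\ref{W-D-C-C}. In fact you supply the quantitative details the paper omits entirely (the estimate $N(t)=o(t^{p})$ from $\sum_n\beta_n^{-p}<\infty$ and the choice of block lengths $L_m$ against the thresholds $C_m\beta_1 L_m^{1/p}$), and your closing observation that the shift with $w_n=(n+1)/n$ is not norm-unimodal correctly explains why the weak criterion, rather than Theorem~\ref{D-C-C}, is needed here.
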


\begin{remark}
This result is contained in a recent article \cite{Gim}, in which F.
Mart\'{\i}nez-Gim\'{e}nez, P. Oprocha and A. Peris, considered
distributional chaos for shift operators.
\end{remark}

Applying this Distributionally Chaotic Criterion \ref{D-C-C}, we'll
provide a sufficient condition which gives distributional chaots for
Cowen-Douglas operators.

\begin{theorem}\label{C-D-Operator}
Let $T\in \mathcal {B}_{n}(\Omega)$. If $\Omega\cap S\neq \phi$,
then $T$ is norm-unimodal. Consequently, $T$ is distributionally
chaotic.
\end{theorem}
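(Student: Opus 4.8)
The plan is to verify the two defining conditions (NU1) and (NU2) of norm-unimodality directly, using the same eigenvector machinery that powered Propositions \ref{str mixing} and \ref{dense per}; the conclusion that $T$ is distributionally chaotic is then immediate from Theorem \ref{D-C-C}. First I would exploit the hypothesis $\Omega\cap S\neq\phi$ together with connectedness of $\Omega$ to extract a bounded connected open subset $\Omega_{1}\subseteq\Omega$ lying strictly inside the unit disk, say $\sup_{\alpha\in\Omega_{1}}|\alpha|=\lambda<1$, exactly as in the proof of Proposition \ref{str mixing}. The candidate norm-unimodal constant is any fixed $\gamma$ with $1<\gamma$; the subtle point is that we cannot take $\gamma$ too large, so I would instead work near the part of $\Omega$ \emph{outside} the disk: pick a bounded connected open $\Omega_{2}\subseteq\Omega$ with $\inf_{\beta\in\Omega_{2}}|\beta|=\rho>1$ and set $\gamma=\rho$ (or any value in $(1,\rho)$).

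The construction of the vectors $x_{m}$ is the heart of the argument. Fix $m\in\mathbb{N}$. By Proposition \ref{local span} applied to $\Omega_{2}$, the span of $\bigcup_{\beta\in\Omega_{2}}\ker(T-\beta)$ is dense in $\mathcal{H}$; in particular $\ker(T-\beta)\neq\{0\}$ for each $\beta\in\Omega_{2}$, so choose any unit eigenvector $e_{\beta}$ with $Te_{\beta}=\beta e_{\beta}$ and $|\beta|=\rho>1$. Wait --- $\inf$ need not be attained, so more carefully: pick $\beta\in\Omega_{2}$ with $|\beta|\ge\gamma$ (possible since $\gamma<\rho$ can be arranged, or one simply enlarges $\Omega_2$ slightly so that some $\beta$ with $|\beta|>1$ lies in it and sets $\gamma=|\beta|$ depending on $m$ --- but $\gamma$ must be uniform in $m$, so the clean route is: fix once and for all a single $\beta_{0}\in\Omega$ with $|\beta_{0}|>1$, which exists because $\Omega$ is open and meets $S$, set $\gamma=|\beta_{0}|>1$, and take $x_{m}=e_{\beta_{0}}$ a unit eigenvector for $\beta_{0}$ for \emph{every} $m$). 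Then $T^{i}x_{m}=\beta_{0}^{i}x_{m}$, so $\|T^{i}x_{m}\|=|\beta_{0}|^{i}\|x_{m}\|=\gamma^{i}\|x_{m}\|$, giving (NU2) for all $i$, not merely $i\le m$.

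The remaining obstacle is condition (NU1): $\lim_{k\to\infty}\|T^{k}x_{m}\|=0$, which plainly \emph{fails} for an eigenvector with eigenvalue of modulus $>1$. So the single-eigenvector choice is wrong, and one genuinely needs a vector that grows geometrically for the first $m$ steps but eventually decays. The fix is to combine an eigenvector from $\Omega_{2}$ with a correction: using Proposition \ref{one span} (generalized eigenspaces at a point $\omega_{0}\in\Omega_{1}$ with $|\omega_{0}|<1$ span $\mathcal{H}$), approximate the $\Omega_{2}$-eigenvector's "bad tail" --- but that does not kill growth either. The correct mechanism, and the step I expect to be the real work, is this: take $x_{m}\in\ker(T-\beta_{0})^{?}$ --- no; rather, observe that membership in $\mathcal{B}_{n}(\Omega)$ for $\Omega$ meeting $S$ forces, via Proposition \ref{local span} with a disk $\Omega_{1}$ inside $S$, that vectors supported on small-modulus generalized eigenspaces have $\|T^{k}v\|\to 0$ polynomially-times-$\lambda^{k}$. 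So I would build $x_{m}$ as a finite sum $x_{m}=v+w$ where $v$ is a unit vector in a generalized eigenspace $\ker(T-\omega_{0})^{p}$ with $|\omega_{0}|<1$ (guaranteeing (NU1)), and $w$ is chosen so that for $i=1,\dots,m$ the vector $T^{i}x_{m}$ is dominated by a piece behaving like $\gamma^{i}$ --- concretely one picks $w$ a tiny multiple of an $\Omega_{2}$-eigenvector so that over the finite window $1\le i\le m$ its exponential growth from size $\varepsilon_m$ up to $\varepsilon_m\rho^m$ overtakes the bounded contribution of $v$, while beyond some $M_m$ both pieces are small. Balancing $\varepsilon_m$ against $\rho^m$, $\lambda$, $\|T\|$, and $m$ is the routine-but-delicate estimate; it mirrors the telescoping bookkeeping already carried out in the proof of Theorem \ref{D-C-C}, and I would present it by first fixing $\omega_{0},\beta_{0},\gamma=\sqrt{|\beta_0|}$ say, then choosing the scale of $w$ last. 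Once (NU1) and (NU2) hold with this uniform $\gamma>1$, Theorem \ref{D-C-C} finishes the proof, and the distributionally $\varepsilon$-scrambled set is $f(D)$ as constructed there.
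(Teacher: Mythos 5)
You correctly identify the central tension --- an eigenvector for an eigenvalue of modulus greater than $1$ gives (NU2) but destroys (NU1) --- but the construction you propose to resolve it does not work. If $x_{m}=v+w$ with $w$ a \emph{nonzero} multiple of an eigenvector for $\beta_{0}$, $|\beta_{0}|>1$, then $T^{k}x_{m}=T^{k}v+\beta_{0}^{k}w$, so $\|T^{k}x_{m}\|\geq|\beta_{0}|^{k}\|w\|-\|T^{k}v\|\to\infty$; your claim that ``beyond some $M_{m}$ both pieces are small'' is false, and (NU1) fails for every such $x_{m}$ exactly as it did for the pure eigenvector. The scaling is also inverted for (NU2): with $v$ of unit norm and $\|w\|=\varepsilon_{m}$ tiny, one has $\|x_{m}\|\approx 1$ but $\|Tx_{m}\|\leq\|Tv\|+\rho\varepsilon_{m}$, which need not exceed $\gamma\|x_{m}\|$ at $i=1$ unless $\varepsilon_{m}$ is of order $1$, so the ``tiny multiple'' cannot be made to dominate on the whole window $1\leq i\leq m$ by choosing it small.

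The missing idea is that the large-eigenvalue eigenvector should appear only as the \emph{target of an approximation}, never as a summand of $x_{m}$. Fix $\beta\in\Omega$ with $|\beta|>1$, a nontrivial $y\in\ker(T-\beta)$, and $1<\gamma<|\beta|$. For given $m$ choose $\epsilon>0$ small (depending on $m$) and form the open neighborhood $U=\bigcap_{i=0}^{m}T^{-i}\bigl(B(T^{i}y,\epsilon)\bigr)$ of $y$; any $z\in U$ satisfies $\|T^{i}z\|\geq|\beta|^{i}\|y\|-\epsilon\geq|\beta|^{i}\|z\|-(|\beta|^{i}+1)\epsilon\geq\gamma^{i}\|z\|$ for $i=1,\ldots,m$. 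Now Proposition \ref{local span} makes the span of the eigenspaces over a bounded $\Omega_{1}\subseteq\Omega$ with $\sup_{\alpha\in\Omega_{1}}|\alpha|=\lambda<1$ dense in $\mathcal{H}$, so $U$ contains a point $x$ that is a \emph{finite sum of small-eigenvalue eigenvectors only}. This single $x$ inherits the growth $\|T^{i}x\|\geq\gamma^{i}\|x\|$ for $i\leq m$ from its proximity to the orbit of $y$, while $\|T^{k}x\|\leq\lambda^{k}\sum_{j}\|x_{j}\|\to 0$ because it has no component outside the unit disk. With that replacement your outline (uniform $\gamma$, then Theorem \ref{D-C-C}) goes through.
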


\begin{proof}
Since $\Omega$ is a connected open subset and $\Omega\cap S\neq
\phi$, there exists $\beta\in\Omega$ with $|\beta|>1$. Furthermore,
we can select a nontrivial $y\in ker(T-\beta)$. Let
$1<\gamma<|\beta|$ be a constant. Given any $m\in\mathbb{N}$, set
$\epsilon<\frac{\|y\|}{2}\cdot\min\{1, \
\frac{|\beta|^i-\gamma^i}{|\beta|^i+1}, \ 1\leq i \leq m\}$. Then
$U=\bigcap\limits_{i=0}^{m}T^{-i}(B(T^{i}y,\epsilon))$ is an open
neighborhood of $y$. Then for any $z\in U$,
$$
\| T^iz \|\geq\| T^iy \|-\epsilon=|\beta|^i\|y\|-\epsilon
\geq|\beta|^i\|z\|-(|\beta|^i+1)\epsilon \geq \gamma^i\|z\|, \
i=1,\ldots,m
$$

By hypothesis, one can obtain a bounded connected open subset
$\Omega_{1}$ such that
$$\sup\limits_{\alpha\in\Omega_{1}}|\alpha|=\lambda<1.$$
By proposition \ref{local span}, there exists a point $x\in U$
satisfying
$$x=\sum\limits_{j=1}^{t}x_{j} ,$$
where $x_{j}\in  {ker(T-\lambda_{j})}, \ {\lambda_{j}\in
\Omega_{1}}, j=1,\ldots,t$. Then this $x$ is the point we hope to
get. One hand,
$$\lim\limits_{k\rightarrow\infty}\|T^kx\|=
\lim\limits_{k\rightarrow\infty}\|\sum\limits_{j=1}^{t}\lambda_{j}^{k}x_{j}\|\leq
\lim\limits_{k\rightarrow\infty}\sum\limits_{j=1}^{t}|\lambda_{j}^{k}|\cdot\|x_{j}\|
\leq
(\sum\limits_{j=1}^{t}\|x_{j}\|)\lim\limits_{k\rightarrow\infty}\lambda^{k}=0.$$
On the other hand, according to the previous statement and $x\in U$,
we have
$$
\| T^ix \|\geq \gamma^i\|x\|, \ \ i=1,2,\ldots,m.
$$
Therefore $T$ is norm-unimodal and hence $T$ is distributionally
chaotic by Theorem \ref{D-C-C}.
\end{proof}


\begin{thebibliography}{000}

\bibitem{Banks} J. Banks, J. Brooks, G. Cairns, G. Davis and P. Stacey, On
Devaney's definition of chaos, \textit{Amer. Math. Monthly, Vol.
99}, 1992, 332-334.

\bibitem{Cowen} M. J. Cowen and R. G. Douglas, Complex geometry and operator theory, \textit{Acta Mathematica , Vol.
141(1)},
1978, 187-261.


\bibitem{Devaney} R. L. Devaney, An introduction to chaotic dynamical systems,
Addison Weseley, 1987.

\bibitem{Duan} J. Duan, X. C. Fu, P. D. Liu and A. Manning, A linear chaotic quantum
harmonic oscillator, \textit{Appl. Math. Lett. 12(1)}, 1999, 15-19.


\bibitem{Fu} X. C. Fu, J. Duan, Infinite-dimensional linear dynamical systems
with chaoticity, \textit{J. Nonlinear Sci. 9(2)}, 1999, 197-211.

\bibitem{Gim} F. Mart\'{\i}nez-Gim\'{e}nez, P. Oprocha and A. Peris, Distributional chaos for backward
shifts, \textit{Journal of Mathematical Analysis and Applications,
Vol. 351(2)}, 2009, 607-615.

\bibitem{Grosse-Erdmann} K. G. Grosse-Erdmann,
Hypercyclic and chaotic weighted shifts , \textit{Studia Math., Vol.
139 (1)}, 2000, 47-68.

\bibitem{Huang} W. Huang and X. Ye, Devaney' chaos or 2-scattering implies Li-Yorke chaos,
\textit{Topology Appl., Vol. 117}, 2002, 259-272.

\bibitem{L-Y} T. Y.Li and J. A. Yorke, Period three implies chaos, \textit{Amer.
Math. Monthly 82(10)}, 1975, 985-992.

\bibitem{Liao1}Gongfu Liao,
Lidong Wang and Xiaodong Duan, A chaotic function with a
distributively scrambled set of full Lebesgue measure,
\textit{Nonlinear Analysis: Theory, Methods \& Applications, Vol.
66(10)},2007, 2274-2280.

\bibitem{Liao2}Gongfu Liao, Zhenyan Chu and Qinjie Fan, Relations between mixing and distributional
chaos, \textit{Chaos, Solitons \& Fractals}, In Press, Corrected
Proof, Available online 16 September 2008.


\bibitem{Mai} J. Mai, Devaney' chaos implies existence of $s$-scrambled sets, \textit{Proc.
Amer. Math. Soc. 132(9)}, 1994, 2761-2767.

\bibitem{Opr} P. Oprocha, A quantum harmonic oscillator and strong chaos, \textit{J.
Phys. A 39(47),} 2006, 14559¨C14565.

\bibitem{S-S} B. Schweizer and J. Sm\'{\i}tal, Measures of chaos and a spectral
decomposition of dynamical systems on the interval, \textit{Trans.
Amer. Math. Soc. 344(2)}, 1994, 737¨C754.

\bibitem{Silverman} S. Silverman, On maps with dense orbits and the definition of
chaos, \textit{Rocky Mountain Jour. Math., Vol. 22}, 1992, 353-375.

\bibitem{Smital1} J. Sm\'{\i}tal, M. \v{S}tef\'{a}nkov\'{a}, Distributional chaos for triangular
maps, \textit{Chaos, Solitons \& Fractals, Volume 21, Issue 5},
2004, 1125-1128.

\bibitem{Smital2} F.Balibrea, J. Sm\'{\i}tal and M. \v{S}tef\'{a}nkov\'{a},  The three versions of
distributional chaos, \textit{Chaos, Solitons \& Fractals, Volume
23, Issue 5,} 2005,  1581-1583.

\bibitem{Wang} Hui Wang, Gongfu Liao and Qinjie Fan, Substitution systems and the
three versions of distributional chaos, \textit{Topology and its
Applications, Volume 156, Issue 2}, 2008, 262-267.

\end{thebibliography}
\end{document}